\algnewcommand{\Inputs}[1]{%
  \State \textbf{Inputs:}
  \Statex \hspace*{\algorithmicindent}\parbox[t]{.8\linewidth}{\raggedright #1}
}
\algnewcommand{\Initialize}[1]{%
  \State \textbf{Initialization:}
  \Statex \hspace*{\algorithmicindent}\parbox[t]{.8\linewidth}{\raggedright #1}
}
\DeclareMathOperator{\st}{s.t.}
\DeclareMathOperator{\tr}{tr}
\algnewcommand{\algorithmicand}{\textbf{ and }}
\algnewcommand{\algorithmicor}{\textbf{ or }}
\algnewcommand{\OR}{\algorithmicor}
\algnewcommand{\AND}{\algorithmicand}
\algnewcommand{\var}{\texttt}
\begin{document}

\title{The exact worst-case convergence rate of the gradient method with fixed step lengths for $L$-smooth functions\thanks{This work was supported by the
 Dutch Scientific Council (NWO)  grant OCENW.GROOT.2019.015, \emph{Optimization for and with Machine Learning (OPTIMAL)}.}
}

\titlerunning{Convergence rate of gradient method}        

\author{ Hadi Abbaszadehpeivasti
  \and
            Etienne de Klerk
            \and Moslem Zamani\footnote{Corresponding author.}}


\institute{
H. Abbaszadehpeivasti\at Tilburg University, Department of Econometrics and Operations Research, Tilburg, The Netherlands\\
\email{h.abbaszadehpeivasti@tilburguniversity.edu}
\and
E. De Klerk\at Tilburg University, Department of Econometrics and Operations Research, Tilburg, The Netherlands\\
\email{e.deklerk@tilburguniversity.edu}
\and
M. Zamani\at Tilburg University, Department of Econometrics and Operations Research, Tilburg, The Netherlands\\
\email{m.zamani\_1@tilburguniversity.edu}
}

\date{Received: date / Accepted: date}

\maketitle

\begin{abstract}
In this paper, we study the convergence rate of
the gradient (or steepest descent) method with fixed step lengths for finding a stationary point of an $L$-smooth function.
 We establish a new convergence rate, and show that the bound may be exact in some cases, in particular when all step lengths lie in the interval $(0,1/L]$. In addition,  we derive an optimal step length with respect to the new bound.
\keywords{$L$-smooth optimization \and Gradient method  \and Performance estimation problem \and Semidefinite programming}
\end{abstract}

\section{Introduction}
\label{intro}

We consider the non-convex unconstrained optimization problem
\begin{align}\label{P}
\min_{x\in \mathbb{R}^n} f(x),
\end{align}
where $f: \mathbb{R}^n\to \mathbb{R}$ is bounded from below, and let a real number $f^\star$ denote a lower bound of problem \eqref{P}. In addition, we assume throughout the paper that $f$ has an $L$-Lipschitz gradient, that is
\begin{align}\label{D2}
\| \nabla f(x)-\nabla f(y)\|\leq L \|x-y\|  \ \ \ \forall x, y\in \mathbb{R}^n,
\end{align}
for some (known) Lipschitz constant $L >0$. Following the notation used by Nesterov \cite{Nesterov}, we let $C^{1,1}_L(\mathbb{R}^n)$ denote functions with $L$-Lipschitz gradient.

 Problem \eqref{P}  arises naturally in many applications including machine learning, signal and image processing, to name but a few \cite{bottou2018optimization, MAL-058}. One of the historic solution methods for problem \eqref{P} is the  gradient  method,  proposed by Cauchy in 1847 \cite{Cauchy}.

The gradient method with fixed step lengths may be described as follows.
\begin{algorithm}
\caption{Gradient method  with fixed step lengths}
\begin{algorithmic}
\State Set $N$ and $\{t_k\}_{k=1}^N$ (step lengths) and pick $x^1\in\mathbb{R}^n$.
\State For $k=1, 2, \ldots, N$ perform the following step:\\
\begin{enumerate}
\item
$x^{k+1}=x^k-t_k\nabla f(x^k)$
\end{enumerate}
\end{algorithmic}
\label{Alg1}
\end{algorithm}

Nesterov {{\cite[page 28]{Nesterov}}} gives the following convergence rate (to a stationary point) for Algorithm \ref{Alg1} when $t_k\in (0, \tfrac{2}{L})$, $k\in\{1, \ldots, N\}$:
$$
\min_{1\leq k\leq N+1} \left\|\nabla f(x^k)\right\|\leq \left(\frac{f(x^1)-f^\star}{\left(\sum_{k=1}^N t_k(1-\tfrac{1}{2}Lt_k)\right)+\tfrac{1}{2L}}\right)^{\tfrac{1}{2}}.
$$
In the special case $t_k=\tfrac{1}{L}$, $k\in\{1, \ldots, N\}$, the last bound becomes
$$
\min_{1\leq k\leq N+1} \left\|\nabla f(x^k)\right\|\leq \left(\frac{2L(f(x^1)-f^\star)}{N+1}\right)^{\tfrac{1}{2}}.
$$

Recently, semidefinite programming performance estimation have been employed as a tool for the worst-case analysis
 of first-order methods \cite{drori2014performance, taylor2017smooth, Taylor, de2017worst, de2020worst}. In this method,
  the worst-case convergence is cast as a quadratic program with quadratic constraints and the problem is then solved by semi-definite programming methods.
By employing the performance estimation method, Taylor {{\cite[page 190]{Taylot_T}}}, without giving a proof, states the following convergence rate:
\begin{equation}
\label{claim:Taylor}
\min_{1\leq k\leq N+1}\left \|\nabla f(x^k)\right\|\leq \left(\frac{4L(f(x^1)-f^\star)}{3N}\right)^{\tfrac{1}{2}},
\end{equation}
for $t_k=\tfrac{1}{L}$, $k\in\{1, \ldots, N\}$. Drori {{\cite[Corollary 1 in Appendix]{Drori}}} considers the case
 that all step lengths are smaller than $\tfrac{1}{L}$, and proves  the following convergence rate
 \begin{align}\label{Dr_R}
\min_{1\leq k\leq N+1} \left\|\nabla f(x^k)\right\|\leq \left(\frac{4(f(x^1)-f^\star)}{\sum_{k=1}^N t_k(4-Lt_k)}\right)^{\tfrac{1}{2}}.
 \end{align}
 It can be observed that when the step lengths are the same for each iteration and tend to $\tfrac{1}{L}$, the bound \eqref{Dr_R} reduces to
  Taylor's convergence rate.

In this paper, we investigate the convergence rate of Algorithm \ref{Alg1} further.
By using the performance estimation method, we provide a converge rate, which is tighter than all aforementioned bounds.
For example, as a part of our main result in Theorem \ref{thm:main}, we improve on \eqref{Dr_R} by showing, for any choice of $t_k \in (0,\sqrt{3}/L)$ ($k\in\{1, \ldots, N\}$), that
\begin{equation}
\label{bound:Thm2}
 \min_{1\leq k\leq N+1}\left\|\nabla f(x^k)\right\|\leq \left(\frac{4\Delta}{\sum_{k=1}^N \min(-L^2t_k^3+4t_k,-Lt_k^2+4t_k)+\tfrac{2}{L}}\right)^{\tfrac{1}{2}}.
 \end{equation}
As a consequence, we also prove and improve on \eqref{claim:Taylor} by showing, in the special case where all $t_k = 1/L$ ($k\in\{1, \ldots, N\}$), that
\[
\min_{1\leq k\leq N+1} \left\|\nabla f(x^k)\right\|\leq \left(\tfrac{4L(f(x^1)-f^\star)}{3N+2}\right)^{\tfrac{1}{2}}.
\]
 In addition, we construct an $L$-smooth function that attains the given bound in Theorem \ref{thm:main} for certain step lengths. We also propose an optimal
  step length that minimizes the right-hand-side of the bound \eqref{bound:Thm2}, namely $t_k=\tfrac{\sqrt{4/3}}{L}$ for all $k \in \{1,\ldots,N\}$.

\subsubsection*{Outline}
The paper is organized as follows. We describe the performance estimation technique in Section \ref{sec:performance estimation}.
In Section \ref{sec:convergence rate}, we study the convergence rate by using performance estimation. Finally,  we conclude the paper with a conjecture.

\subsubsection*{Notation} The $n$-dimensional Euclidean space is denoted by $\mathbb{R}^n$.
 We use $\langle \cdot, \cdot\rangle$ and $\| \cdot\|$ to denote the Euclidean inner product and norm, respectively.
  For a matrix $A$, $A_{ij}$ denotes its $(i, j)$-th entry,
  and $A^T$ represents the transpose of $A$. The  notation $A\succeq 0$ means the matrix $A$ is symmetric positive semi-definite.

\section{Performance estimation}
\label{sec:performance estimation}
Computation of the worst-case convergence rate for a given iterative method and  a given class of functions is
an infinite-dimensional optimization problem. In their seminal paper \cite{drori2014performance}, Drori and Teboulle take advantage of this idea,
 called performance estimation, and introduce  some relaxation method to deal with this infinite-dimensional optimization problem.
 Performance estimation has been used extensively  for the analysis of first-order
  methods \cite{drori2014performance, taylor2017smooth, Taylor, de2017worst, de2020worst}.

 Similar to problem (P) in \cite{drori2014performance}, the worst-case convergence rate of Algorithm \ref{Alg1} may be formulated as the following abstract optimization problem,
\begin{align}\label{P1}
\nonumber   \max & \ \left(\min_{1\leq k\leq N+1} \left\|\nabla f(x^k)\right\|\right)\\
\nonumber  \st \ &  f(x^1)-f^\star\leq \Delta  \\
&  \  x^{N+1}, x^N, \ldots., x^2 \ \textrm{are generated  by Algorithm \ref{Alg1} w.r.t.}\ f, x^1  \\
\nonumber  & \ f(x)\geq f^\star \ \forall x\in\mathbb{R}^n\\
\nonumber  & \ f\in C^{1,1}_L(\mathbb{R}^n)\\
\nonumber  & \ x^1\in\mathbb{R}^n,
\end{align}
where $\Delta\geq0$ denote the difference between the given lower bound, $f^\star$, and the value of $f$ at the starting point. In problem
 \eqref{P1}, $f$ and $x^1$ are decision variables. This is an infinite-dimensional optimization problem with infinite number of constraints,
  and consequently intractable in general. In what follows, we provide a semidefinite programming relaxation for the problem.
\begin{definition}{{\cite[Definition 3.8.]{Taylor}}}
Let $L\geq 0$. A differentiable function $f : \mathbb{R}^n\to\mathbb{R}$ is called
$L$-smooth, if it satisfies the following condition,
\begin{align}\label{D1}
\left|f(y)-f(x)-\left\langle \nabla f(x), y-x\right\rangle\right|\leq \tfrac{L}{2}\left\|y-x\right\|^2 \ \ \forall x, y\in \mathbb{R}^n.
\end{align}
\end{definition}
 The following proposition states a well-known characterization of $L$-smooth functions that follows, e.g., from \cite[Lemma 1.2.3]{Nesterov}, \cite[ Theorem 2.1.5]{Nesterov} and  \cite[Lemma 3.9]{Taylor}.

\begin{proposition}
Let $L\geq 0$. $f : \mathbb{R}^n\to\mathbb{R}$ is $L$-smooth, if and only if it has an $L$-Lipschitz gradient.
\end{proposition}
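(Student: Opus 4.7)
For the direction \eqref{D2} implies \eqref{D1}, my plan is to use the fundamental theorem of calculus along the segment from $x$ to $y$. Writing $f(y)-f(x)-\langle\nabla f(x),y-x\rangle=\int_0^1\langle\nabla f(x+t(y-x))-\nabla f(x),\,y-x\rangle\,dt$, applying Cauchy--Schwarz pointwise in $t$, and invoking the hypothesis $\|\nabla f(x+t(y-x))-\nabla f(x)\|\leq Lt\|y-x\|$, the integrand is at most $Lt\|y-x\|^2$ in absolute value; integration then yields $\tfrac{L}{2}\|y-x\|^2$, which is exactly the two-sided bound in \eqref{D1}.

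For the reverse direction \eqref{D1} implies \eqref{D2}, I would first split the absolute-value bound \eqref{D1} into the pair of one-sided inequalities $\pm(f(y)-f(x)-\langle\nabla f(x),y-x\rangle)\leq\tfrac{L}{2}\|y-x\|^2$. A short algebraic rearrangement shows that these are respectively equivalent to the differentiable auxiliary functions $g_\pm(y):=\tfrac{L}{2}\|y\|^2\pm f(y)$ lying above their tangent hyperplane at every reference point, i.e.\ being convex. Hence \eqref{D1} is equivalent to the joint convexity of $g_+$ and $g_-$.

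From this simultaneous convexity and differentiability of $g_\pm$, the $L$-Lipschitz property of $\nabla f$ follows from the standard equivalence, valid for any convex differentiable function, between having a quadratic upper model around each point and cocoercivity of the gradient. The main subtlety---and the step I expect to be the main obstacle---is extracting the sharp constant $L$ in \eqref{D2} rather than the weaker constant that a naive triangle-inequality argument applied to $\nabla g_+$ and $\nabla g_-$ separately would give; this requires combining the cocoercivity estimates arising from $g_+$ and $g_-$ simultaneously. I would invoke the details worked out in \cite[Lemma 1.2.3 and Theorem 2.1.5]{Nesterov} and \cite[Lemma 3.9]{Taylor} rather than reproduce them here.
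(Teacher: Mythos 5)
The paper does not actually prove this proposition: it is stated as a known characterization and referred to \cite[Lemma 1.2.3, Theorem 2.1.5]{Nesterov} and \cite[Lemma 3.9]{Taylor}, so there is no internal proof to compare against. Your sketch is correct, and the step you flag as the likely obstacle does yield the sharp constant. For the record, here is the one-line combination you are alluding to. From \eqref{D1} one checks, exactly as you describe, that $g_\pm=\tfrac{L}{2}\|\cdot\|^2\pm f$ are convex; since $g_++g_-=L\|\cdot\|^2$, convexity of $g_+$ immediately gives $g_-(y)-g_-(x)-\langle\nabla g_-(x),y-x\rangle\le L\|y-x\|^2$ (and symmetrically), i.e.\ each $g_\pm$ has a quadratic upper bound with modulus $2L$. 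By the standard equivalence for convex differentiable functions this gives $\tfrac{1}{2L}$-cocoercivity of $\nabla g_\pm$. Writing $\delta=y-x$ and $\gamma=\nabla f(y)-\nabla f(x)$, so $\nabla g_\pm(y)-\nabla g_\pm(x)=L\delta\pm\gamma$, the two cocoercivity inequalities $\langle L\delta\pm\gamma,\delta\rangle\ge\tfrac{1}{2L}\|L\delta\pm\gamma\|^2$ add, via the parallelogram identity, to $2L\|\delta\|^2\ge L\|\delta\|^2+\tfrac{1}{L}\|\gamma\|^2$, hence $\|\gamma\|\le L\|\delta\|$, which is \eqref{D2} with the sharp constant. (The degenerate case $L=0$ is trivial, as \eqref{D1} then forces $f$ affine.) My only suggestion is to write out that short combination explicitly rather than deferring it entirely to the citations: as you correctly anticipate, the naive triangle-inequality route through the $2L$-Lipschitz bounds on $\nabla g_\pm$ separately only gives a constant multiple of $L$, so the reader needs to see why the combined argument recovers $L$ exactly.
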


The following well-known result is a fundamental property of gradient descent for $L$-smooth functions, if the step length $1/L$ is used.
\begin{proposition}{{\cite[page 26]{Nesterov}}}
\label{lemma:descent}
If $f : \mathbb{R}^n\to\mathbb{R}$ is $L$-smooth, and $x \in \mathbb{R}^n$, then
\[
f\left(x - \frac{1}{L}\nabla f(x) \right) \le f(x) - \frac{1}{2L}\|\nabla f(x)\|^2.
\]
\end{proposition}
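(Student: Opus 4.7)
The plan is to apply the $L$-smoothness inequality \eqref{D1} directly, choosing $y$ to be the gradient step $x - \tfrac{1}{L}\nabla f(x)$. Since \eqref{D1} bounds $|f(y)-f(x)-\langle \nabla f(x), y-x\rangle|$ by $\tfrac{L}{2}\|y-x\|^2$, I would use only the upper-bound side, i.e.\ $f(y) \le f(x) + \langle \nabla f(x), y-x\rangle + \tfrac{L}{2}\|y-x\|^2$, which holds for every $x,y$ by the preceding proposition characterizing $L$-smoothness.

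Next I would substitute $y - x = -\tfrac{1}{L}\nabla f(x)$. The inner product term becomes $-\tfrac{1}{L}\|\nabla f(x)\|^2$, while the quadratic term becomes $\tfrac{L}{2}\cdot\tfrac{1}{L^2}\|\nabla f(x)\|^2 = \tfrac{1}{2L}\|\nabla f(x)\|^2$. Adding these to $f(x)$ and simplifying yields exactly $f(x) - \tfrac{1}{2L}\|\nabla f(x)\|^2$, which is the desired bound.

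There is no real obstacle here: the proof is a one-line substitution into the quadratic upper model provided by $L$-smoothness, and the step length $1/L$ is precisely the minimizer of that quadratic upper model in the direction $-\nabla f(x)$, which is why the coefficient collapses from $1/L$ to $1/(2L)$. The only thing to be careful about is invoking the upper-bound half of \eqref{D1} (the lower-bound half is not needed), so that the statement applies to all $L$-smooth $f$ without any additional convexity or sign assumption.
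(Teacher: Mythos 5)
Your proof is correct. The paper itself gives no proof of this proposition, only a citation to Nesterov's book; your argument is precisely the standard one-line substitution of $y = x - \tfrac{1}{L}\nabla f(x)$ into the upper half of the $L$-smoothness inequality \eqref{D1}, which is the same computation found in the cited reference.
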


The following theorem plays a key role in our analysis. Indeed, it provides necessary and sufficient conditions for the
 interpolation of $L$-smooth functions. Using this theorem, we will formulate problem \eqref{P1} as a finite dimensional optimization problem.
\begin{theorem}[{{\cite[Lemma 3.9.]{Taylor}, \cite[Theorem 7 in Appendix]{Drori}}}]\label{T1}
Let  $\{(x^i; g^i; f^i)\}_{i\in I}\subseteq \mathbb{R}^n\times \mathbb{R}^n \times \mathbb{R}$ with a given index set $I$ and $L > 0$. There
 exists an $L$-smooth function  $f$ with
\begin{align}\label{int_fg}
f(x^i) = f^i, \nabla f(x^i) = g^i \ \ i\in I,
\end{align}
if and only if
{\small{
\begin{align}\label{Int-c}
\tfrac{1}{2L}\left\|g^i-g^j\right\|^2-\tfrac{L}{4}\left\|x^i-x^j-\tfrac{1}{L}(g^i-g^j)\right\|^2\leq f^i-f^j-\left\langle g^j, x^i-x^j\right\rangle \ \ i, j\in I.
\end{align}
}}
In addition, if the triple $\{(x^i; g^i; f^i)\}_{i\in I}$ satisfies \eqref{Int-c}, then there exists $L$-smooth function $f$ for which \eqref{int_fg} holds
and $\min_{x\in\mathbb{R}^n} f(x)=\min_{i\in I} f_i-\tfrac{1}{2L}\|g^i\|^2$. Moreover, letting $i^* \in \arg\min_{i\in I} f_i-\tfrac{1}{2L}\|g^i\|^2$,
a global minimizer of this function is given by $x^\star = x_{i^*} - \frac{1}{L}g^{i^*}$.
\end{theorem}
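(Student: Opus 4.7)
My plan is to reduce the $L$-smooth interpolation problem to the classical convex smooth interpolation via the shift $h(x):=f(x)+\tfrac{L}{2}\|x\|^2$. The key observation is that $f\in C^{1,1}_L(\mathbb{R}^n)$ is equivalent to $h$ being convex with $2L$-Lipschitz gradient: expanding
\[
h(y)-h(x)-\langle\nabla h(x),y-x\rangle = f(y)-f(x)-\langle\nabla f(x),y-x\rangle+\tfrac{L}{2}\|y-x\|^2,
\]
the lower side of \eqref{D1} becomes nonnegativity (convexity of $h$), while the upper side becomes an upper bound of $L\|y-x\|^2$ (the $2L$-smoothness of $h$ in the convex sense). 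Under this correspondence, the triples $\{(x^i;g^i;f^i)\}$ interpolating $f$ correspond bijectively to shifted triples $\{(x^i;\tilde g^i;\tilde f^i)\}$ with $\tilde g^i=g^i+Lx^i$ and $\tilde f^i=f^i+\tfrac{L}{2}\|x^i\|^2$ interpolating $h$.

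The second step is an algebraic verification that \eqref{Int-c} is exactly the Taylor--Hendrickx--Glineur convex-smooth interpolation inequality
\[
\tilde f^i-\tilde f^j-\langle\tilde g^j,x^i-x^j\rangle\ge\tfrac{1}{4L}\|\tilde g^i-\tilde g^j\|^2
\]
applied to the shifted data. Expanding both sides, the cross terms $L\langle x^j,x^i-x^j\rangle$ and $\tfrac{L}{2}(\|x^i\|^2-\|x^j\|^2)$ on the left collapse into $\tfrac{L}{2}\|x^i-x^j\|^2$, matching $\tfrac{L}{4}\|x^i-x^j\|^2$ on the right after squaring; rearrangement then yields precisely \eqref{Int-c}. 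Since the convex-smooth interpolation result is established, the ``if and only if'' follows at once: any convex $2L$-smooth interpolant $h$ of the shifted data yields an $L$-smooth interpolant $f=h-\tfrac{L}{2}\|\cdot\|^2$ of the original data, and vice versa.

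For the statement about the minimum, one direction is immediate: Proposition~\ref{lemma:descent} applied to any interpolating $f$ gives $f(x^i-\tfrac{1}{L}g^i)\le f^i-\tfrac{1}{2L}\|g^i\|^2$ for each $i\in I$, so $\inf_x f(x)\le\min_{i\in I} f^i-\tfrac{1}{2L}\|g^i\|^2$. The non-trivial direction is to exhibit an interpolant that realizes this bound with minimizer $x^{i^*}-\tfrac{1}{L}g^{i^*}$. My plan is to use the explicit extremal convex interpolant $h^\star$ from the Taylor--Hendrickx--Glineur construction on the shifted data (essentially a pointwise maximum of affine supports regularized by a quadratic upper envelope) and set $f^\star=h^\star-\tfrac{L}{2}\|\cdot\|^2$. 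Checking that $f^\star$ attains its infimum at $x^{i^*}-\tfrac{1}{L}g^{i^*}$ with value $f^{i^*}-\tfrac{1}{2L}\|g^{i^*}\|^2$ then reduces to verifying first-order optimality of $h^\star$ at that point, which in turn uses \eqref{Int-c} for $i=i^*$ together with the defining choice of $i^*$.

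The main obstacle I anticipate is this last construction: pinning down a specific interpolant that not only attains equality in the descent-lemma bound at $x^{i^*}-\tfrac{1}{L}g^{i^*}$, but also has no deeper dip elsewhere, so that the prescribed value is truly the global minimum rather than merely a stationary-point value. The shift reduction and the algebraic correspondence between \eqref{Int-c} and the convex-smooth condition are routine if tedious; the genuine content sits in the extremal interpolant argument, which has to be tight enough to realize the worst-case infimum predicted by Proposition~\ref{lemma:descent}.
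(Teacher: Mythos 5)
The paper does not prove Theorem~\ref{T1}; it states it with citations to \cite[Lemma 3.9]{Taylor} (for the interpolation equivalence) and \cite[Theorem 7 in Appendix]{Drori} (for the global-minimum claim), and only remarks that another proof of the first part appears in \cite{Wells}. So there is no in-paper proof to compare against, and I assess your sketch on its own merits.

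Your reduction is sound and is essentially the route taken in the cited references. Setting $h=f+\tfrac{L}{2}\|\cdot\|^2$, $\tilde g^i=g^i+Lx^i$, $\tilde f^i=f^i+\tfrac{L}{2}\|x^i\|^2$, one has $f\in C^{1,1}_L(\mathbb{R}^n)$ iff $h$ is convex with $2L$-Lipschitz gradient, and expanding
\begin{align*}
\tilde f^i-\tilde f^j-\langle\tilde g^j,x^i-x^j\rangle\ \geq\ \tfrac{1}{4L}\|\tilde g^i-\tilde g^j\|^2
\end{align*}
does indeed collapse to \eqref{Int-c}: the left side becomes $f^i-f^j-\langle g^j,x^i-x^j\rangle+\tfrac{L}{2}\|x^i-x^j\|^2$, the right side becomes $\tfrac{1}{4L}\|g^i-g^j\|^2+\tfrac12\langle g^i-g^j,x^i-x^j\rangle+\tfrac{L}{4}\|x^i-x^j\|^2$, and subtracting reproduces the completed square in \eqref{Int-c}.

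The gap is in the global-minimum part, precisely where you flag uncertainty, and your sketch of how to close it is wrong in a substantive way. You say the claim ``reduces to verifying first-order optimality of $h^\star$'' at $x^\star=x^{i^*}-\tfrac{1}{L}g^{i^*}$. That is the wrong object: $\nabla h^\star(x^\star)=0$ gives $\nabla f^\star(x^\star)=-Lx^\star$, not zero; what you need is $\nabla f^\star(x^\star)=0$, i.e.\ $\nabla h^\star(x^\star)=Lx^\star$. More importantly, even after this correction, stationarity of $f^\star$ at $x^\star$ does not imply global minimality, because $f^\star=h^\star-\tfrac{L}{2}\|\cdot\|^2$ is nonconvex (a difference of convex functions), so convexity of $h^\star$ gives no control on where $f^\star$ dips. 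The result you are trying to reproduce requires constructing an interpolant that is \emph{bounded below by} $f^{i^*}-\tfrac{1}{2L}\|g^{i^*}\|^2$ everywhere --- e.g.\ by augmenting the data with the triple $(x^\star;\,0;\,f^{i^*}-\tfrac{1}{2L}\|g^{i^*}\|^2)$, checking that \eqref{Int-c} is preserved (which uses the definition of $i^*$), and then using a specific interpolant of the augmented data whose growth away from the data points can be controlled. That control-from-below step is the missing content; citing ``the extremal convex interpolant'' without verifying a lower bound on $f^\star$, rather than on $h^\star$, does not supply it.
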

Another proof of the first part of Theorem \ref{T1} may be found in  \cite[Theorem 2, Page 148]{Wells}. By virtue of Theorem \ref{T1}, problem \eqref{P1} may be reformulated as follows,
\begin{align}\label{P2}
\nonumber   \max & \ \left(\min_{1\leq k\leq N+1} \left\|g^k\right\|\right)\\
\nonumber \st \ & \tfrac{1}{2L}\left\|g^i-g^j\right\|^2-\tfrac{L}{4}\left\|x^i-x^j-\tfrac{1}{L}(g^i-g^j)\right\|^2\leq f^i-f^j-\\
\nonumber & \ \ \ \ \ \left\langle g^j, x^i-x^j\right\rangle \ \ i, j\in\left\{1, \ldots, N+1\right\}  \\
&  \  x^{k+1}=x^k-t_k g^k  \ \ k\in\left\{1, \ldots, N\right\} \\
\nonumber& \ f^k\geq f^\star  \ \ k\in\left\{1, \ldots, N+1\right\}\\
\nonumber  &  f^1-f^\star\leq \Delta.
\end{align}
In the above formulation, $x^k,\ g^k,\ f^k$, $k\in \left\{1, \ldots, N+1\right\}$,  are decision variables. Note that in the above formulation, the constraints $f(x)\geq f^\star$ for each $x\in\mathbb{R}^n$ are replaced by $f^k\geq f^\star,  \ \ k\in\left\{1, \ldots, N+1\right\}$. These constraints do not necessarily impose a given $L$-Lipschitz  function $f$ with
$$
f(x^i) = f^i, \nabla f(x^i) = g^i \ \ i\in\left\{1, \ldots, N+1\right\},
$$
which  has the lower bound $f^\star$.
 Therefore, the optimal value of \eqref{P1} and \eqref{P2} may not be equal in general. However, if an optimal solution of problem \eqref{P2} satisfies $f^\star=\min_{1\leq k\leq N+1} f^k-\tfrac{1}{2L}\|g^k\|^2$, the formulation will be exact; see the second part of Theorem \ref{T1}. By Proposition \ref{lemma:descent}, we have $f(x)-\tfrac{1}{2L}\|\nabla f(x)\|^2\geq f^\star$ for $x\in\mathbb{R}^n$. Hence, we replace the constraint $f^k\geq f^\star$ by $f^k-\tfrac{1}{2L}\|g^k\|^2\geq f^\star$ and  consider the following problem:
\begin{align}\label{P3}
\nonumber   \max & \ \left(\min_{1\leq k\leq N+1} \left\|g^k\right\|\right)\\
\nonumber \st \ & \tfrac{1}{2L}\left\|g^i-g^j\right\|^2-\tfrac{L}{4}\left\|x^i-x^j-\tfrac{1}{L}(g^i-g^j)\right\|^2\leq f^i-f^j-\\
\nonumber & \ \ \ \ \ \left\langle g^j, x^i-x^j\right\rangle \ \ i, j\in\left\{1, \ldots, N,N+1\right\}  \\
&  \  x^{k+1}=x^k-t_k g^k  \ \ k\in\left\{1, \ldots, N+1\right\} \\
\nonumber& \ f^k-\tfrac{1}{2L}\|g^k\|^2- f^\star\geq 0  \ \ k\in\left\{1, \ldots, N+1\right\}\\
\nonumber  &  f^1-f^\star\leq \Delta.
\end{align}

From the constraint $x^{k+1}=x^k-t_k g^k$, we get $x^i=x^1+\sum_{k=1}^{i-1} g^k$, $i\in\left\{2, \ldots, N\right\}$.
By using this relation to eliminate the $x^i$
($i\in\left\{2, \ldots, N+2\right\}$), problem \eqref{P3} may be written as follows:
{\small{
\begin{align}\label{P4}
 \nonumber  \max & \ \ell\\
 \nonumber  \st \ &  f^i-f^j-\tfrac{1}{2L}\left\|g^i-g^j\right\|^2+ \tfrac{L}{4}\left\|-\sum_{k=j}^{i-1} t_kg^k+\tfrac{1}{L}(g^i-g^j)\right\|^2+
 \left\langle g^j, \sum_{k=j}^{i-1} t_kg^k\right\rangle\geq 0 \ \ i> j  \\
 \nonumber  &  f^i-f^j-\tfrac{1}{2L}\left\|g^i-g^j\right\|^2 +\tfrac{L}{4}\left\|\sum_{k=i}^{j-1} t_kg^k-\tfrac{1}{L}(g^i-g^j)\right\|^2-
 \left\langle g^j, \sum_{k=i}^{j-1} t_kg^k\right\rangle\geq 0 \ \ i< j  \\
  & \ f^k-\tfrac{1}{2L}\|g^k\|^2- f^\star\geq 0  \ \ k\in\left\{1, \ldots, N+1\right\}\\
  \nonumber  &  f^\star-f^1+\Delta\geq 0\\
  \nonumber  &  \left\| g^k\right\|^2-\ell\geq 0  \ \ \ k\in\left\{1, \ldots, N+1\right\},
\end{align}
}}
\hspace{-.245cm} where $\ell$ is an auxiliary variable  to convert problem \eqref{P3} into a quadratic program. Problem \eqref{P4} is a non-convex quadratic program with quadratic constraints. In the following proposition, we show that the optimal values of problems \eqref{P1} and \eqref{P3} (or equivalently problem \eqref{P4}) are the same for step lengths in the interval $(0, \tfrac{1}{2L})$.
\begin{proposition}
If $t_k\in(0, \tfrac{2}{L})$, $k\in\{1, \dots, N\}$, then    problems \eqref{P1} and \eqref{P3} (or equivalently problem \eqref{P4}) share the same optimal value.
\end{proposition}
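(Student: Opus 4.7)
The plan is to prove the two inequalities $\mathrm{opt}\eqref{P1}\le\mathrm{opt}\eqref{P3}$ and $\mathrm{opt}\eqref{P3}\le\mathrm{opt}\eqref{P1}$ separately, using Theorem \ref{T1} as the bridge between the functional formulation \eqref{P1} and the finite‐dimensional formulation \eqref{P3}.

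For the first inequality, I would argue that \eqref{P3} is a valid relaxation of \eqref{P1}. Starting from any feasible pair $(f,x^1)$ of \eqref{P1}, I would set $g^k:=\nabla f(x^k)$ and $f^k:=f(x^k)$ along the trajectory of Algorithm \ref{Alg1}. The necessity direction of Theorem \ref{T1} immediately produces the interpolation inequalities appearing in \eqref{P3}; the update relations $x^{k+1}=x^k-t_kg^k$ hold by definition of the algorithm; and the inequality $f^1-f^\star\le\Delta$ is inherited. The remaining constraint $f^k-\tfrac{1}{2L}\|g^k\|^2-f^\star\ge 0$ is precisely what Proposition \ref{lemma:descent} gives in combination with $f(y)\ge f^\star$ for all $y$: applying the descent lemma at $x^k$ yields $f^\star\le f\bigl(x^k-\tfrac{1}{L}g^k\bigr)\le f^k-\tfrac{1}{2L}\|g^k\|^2$. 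Since the objectives coincide, this half follows.

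For the reverse inequality, I would carry out a lifting: given any feasible $\{(x^k,g^k,f^k)\}$ of \eqref{P3}, the sufficiency direction of Theorem \ref{T1} furnishes an $L$‐smooth function $f$ interpolating the data, i.e.\ $f(x^k)=f^k$ and $\nabla f(x^k)=g^k$. The crucial ingredient is the \emph{moreover}-clause of Theorem \ref{T1}, which identifies $\min_x f(x)=\min_k\bigl(f^k-\tfrac{1}{2L}\|g^k\|^2\bigr)$; combined with the relaxed lower-bound constraint in \eqref{P3}, this forces $f\ge f^\star$ globally. The relation $x^{k+1}=x^k-t_kg^k=x^k-t_k\nabla f(x^k)$ then shows that the given iterates are precisely those produced by Algorithm \ref{Alg1} applied to $f$ with starting point $x^1$, and $f(x^1)-f^\star=f^1-f^\star\le\Delta$ is preserved. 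Hence $(f,x^1)$ is feasible for \eqref{P1} with an identical objective value.

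The main point to verify with care is the \emph{moreover}-clause of Theorem \ref{T1}: one really needs the minimum of the constructed function to equal $\min_k(f^k-\tfrac{1}{2L}\|g^k\|^2)$, not merely to be bounded below by it, for the relaxed constraint $f^k-\tfrac{1}{2L}\|g^k\|^2\ge f^\star$ to be equivalent to the global lower-bound constraint $f\ge f^\star$ imposed in \eqref{P1}. The hypothesis $t_k\in(0,2/L)$ does not seem essential to this equivalence argument itself; it matches the classical range in which Algorithm \ref{Alg1} is meaningful and will be used in the subsequent convergence analysis, and it guarantees in particular that the descent lemma used in the first direction is non-vacuous.
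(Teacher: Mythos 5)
Your proposal is correct and follows essentially the same route as the paper: one direction is the observation that \eqref{P3} is a relaxation of \eqref{P1} (justified by Proposition~\ref{lemma:descent}, which the paper invokes just before stating this proposition), and the reverse direction is the lifting via the sufficiency and \emph{moreover}-clause of Theorem~\ref{T1}. Your note that the hypothesis $t_k\in(0,2/L)$ plays no role in the equivalence argument is also accurate; the paper's proof does not use it either.
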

\begin{proof}
Clearly,  problem \eqref{P3} is a relaxation of  problem \eqref{P1}. Therefore, we only need to show that, for any feasible solution of \eqref{P3}, say $\{(\bar x^i; \bar g^i; \bar f^i)\}_{1}^{N+1}$, there exists an $L$-smooth function $f$ with
 $$
 f(\bar x^i)=\bar f^i, \ \nabla f(\bar x^i)=\bar g^i, \ \ \    1\leq i\leq N+1,
 $$
and $\min_{x\in\mathbb{R}^n} f(x)\geq f^\star$. The existence such of a function follows from Theorem \ref{T1}, as  all assumptions of Theorem \ref{T1} are satisfied.
\end{proof}

To obtain a tractable form of problem \eqref{P4}, we relax it to a semidefinite program, in the spirit of \cite{drori2014performance}. To this end, we define the $(N+1)\times(N+1)$  positive semi-definite matrix $G$ as,
\begin{align*}
G=&
\begin{pmatrix}
 \left(g^1\right)^T\\ \vdots \\ \left(g^{N+1}\right)^T
\end{pmatrix}
\begin{pmatrix}
 g^1& \hdots & g^{N+1} \end{pmatrix}=
\begin{pmatrix}
 \left\|g^1\right\|^2  &~ \hdots &~  \left\langle g^1, g^{N+1}\right\rangle\\
\vdots  &~ \ddots &~\vdots \\
 \left\langle g^1, g^{N+1}\right\rangle  &~ \hdots &~   \left\|g^{N+1}\right\|^2
\end{pmatrix}.
\end{align*}
We may now formulate the following semidefinite program,
\begin{align}\label{S1}
 \nonumber   \max & \  \ell\\
 \nonumber \st \ &  f^i-f^j+\tr(A^{ij}G)\geq 0 \ \ \ \ i\neq j\in\{1, \ldots, N+1\}  \\
  \nonumber &  f^k- \tfrac{1}{2L}G_{kk}-f^\star\geq 0  \ \ \ \ k\in\{1, \ldots, N+1\}\\
  &  f^\star-f^1+\Delta\geq 0\\
  \nonumber  & G_{kk}-\ell\geq 0 \ \ \ \ k\in\{1, \ldots, N+1\}\\
 \nonumber& G\succeq 0,
\end{align}
where the $(N+1)\times(N+1)$ matrices $A^{ij}$, $i\neq j\in\{1, \ldots, N+1\}$, are formed
according to the constraints  \eqref{P4}, and $G, \ell, f^i$, $i\in\{1, \ldots, N+1\}$, are decision variables.
 Problem \eqref{S1} is a relaxation of \eqref{P4}, but if $n\geq N+1$ the relaxation is exact,
  that is  the optimal values of \eqref{P4} and \eqref{S1} are the same. Indeed, if $n\geq N+1$, and $G$ is a feasible matrix in \eqref{S1},
  then $G$ is the Gram matrix of $N+1$ vectors in $\mathbb{R}^n$, and these vectors may be identified with $g^1, \ldots, g^{N+1}$;
    a similar argument is used in
   \cite[Theorem 5]{taylor2017smooth}.
\section{Worst-case convergence rate}
\label{sec:convergence rate}
In this section, we investigate the convergence rate of gradient method with fixed step lengths.
The next theorem gives the worst-case convergence rate of Algorithm \ref{Alg1} to a stationary point of an $L$-smooth function.
The technique of the proof, as is usual for SDP performance estimation, is to use weak duality.
In particular, we will in fact construct a feasible solution to the dual SDP problem of \eqref{S1}, and thus derive an upper bound for problem  \eqref{P4}.

In practice, this dual feasible solution is constructed in a computer-assisted manner, by solving the primal and dual SDP problems
for different fixed values of the parameters, and subsequently guessing the values of the dual multipliers.
(There is also dedicated software for this purpose, namely `PESTO' by Taylor, Glineur, and Hendrickx \cite{8263832}.)
In the proof
of Theorem \ref{thm:main}, we simply verify that these `guesses' are correct.

 \begin{theorem}
 \label{thm:main}
Let $t_k\in(0, \tfrac{\sqrt{3}}{L})$ for $k\in\{1, \ldots, N\}$.
Consider  $N$ iterations of Algorithm \ref{Alg1} with step lengths $t_k$ ($k\in\{1, \ldots, N\}$), applied to some $L$-smooth function $f$ with
minimum value $f^\star$, with the starting point $x^1$ satisfying $f(x^1) - f^\star \le \Delta$, for some given $\Delta >0$.

Then, if $x^1,\ldots,x^{N+1}$ denote the iterates of Algorithm \ref{Alg1}, one has
\begin{align}\label{B1}
 \min_{1\leq k\leq N+1}\left\|\nabla f(x^k)\right\|\leq \left(\frac{4\Delta}{\sum_{k=1}^N \min(-L^2t_k^3+4t_k,-Lt_k^2+4t_k)+\tfrac{2}{L}}\right)^{\tfrac{1}{2}}.
\end{align}
In particular, if $t_k=\tfrac{\sqrt{4/3}}{L}$ for $k\in\left\{1, \ldots, N\right\}$, we get
\begin{align}\label{B3}
\min_{1\leq k\leq N+1} \left\|\nabla f(x^k)\right\|\leq \left(\tfrac{6\sqrt{3}L(f(x^1)-f^\star)}{8N+3\sqrt{3}}\right)^{\tfrac{1}{2}}.
\end{align}
Similarly, if $t_k=\tfrac{1}{L}$ for $k\in\left\{1, \ldots, N\right\}$, one has
\begin{align}\label{B2}
\min_{1\leq k\leq N+1} \left\|\nabla f(x^k)\right\|\leq \left(\tfrac{4L(f(x^1)-f^\star)}{3N+2}\right)^{\tfrac{1}{2}}.
\end{align}
\end{theorem}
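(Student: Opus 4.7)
The plan is to establish \eqref{B1} by weak duality applied to the SDP \eqref{S1}. Concretely, I would exhibit nonnegative multipliers for a carefully chosen subset of the interpolation constraints of Theorem \ref{T1}, for the terminal inequality $f^{N+1}-\tfrac{1}{2L}\|g^{N+1}\|^{2}\ge f^{\star}$ coming from Proposition \ref{lemma:descent}, and for the initial bound $\Delta\ge f^{1}-f^{\star}$, such that the weighted sum reduces to
\[
\Bigl(\sum_{k=1}^{N}\min\bigl(-L^{2}t_{k}^{3}+4t_{k},\,-Lt_{k}^{2}+4t_{k}\bigr)+\tfrac{2}{L}\Bigr)\min_{1\le k\le N+1}\|g^{k}\|^{2}\le 4\Delta.
\]
Inequality \eqref{B1} is then immediate, and \eqref{B2} and \eqref{B3} follow by substituting $t_{k}=1/L$ and $t_{k}=\sqrt{4/3}/L$ and evaluating the $\min$ (both arguments agree at $Lt_{k}=1$, giving the constant $3/L$ and the denominator $3N+2$).

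The useful form of the interpolation inequality \eqref{Int-c} is its symmetrised rewrite
\[
f^{i}-f^{j}\ge \tfrac{1}{2}\langle g^{i}+g^{j},\,x^{i}-x^{j}\rangle+\tfrac{1}{4L}\|g^{i}-g^{j}\|^{2}-\tfrac{L}{4}\|x^{i}-x^{j}\|^{2}.
\]
Specialising to $(i,j)=(k,k+1)$ with $x^{k}-x^{k+1}=t_{k}g^{k}$, a direct expansion yields
\[
f^{k}-f^{k+1}\ge \bigl(\tfrac{t_{k}}{2}+\tfrac{1}{4L}-\tfrac{Lt_{k}^{2}}{4}\bigr)\|g^{k}\|^{2}+\tfrac{1}{4L}\|g^{k+1}\|^{2}+\tfrac{Lt_{k}-1}{2L}\langle g^{k},g^{k+1}\rangle.
\]
In the special case $t_{k}=1/L$ the cross-product term vanishes; telescoping over $k$ and combining with Proposition \ref{lemma:descent} at the final iterate yields coefficient $3/(4L)$ on each interior $\|g^{k}\|^{2}$, together with boundary contributions summing to $2/(4L)$, from which \eqref{B2} drops out. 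This warm-up already exhibits the additive $+\tfrac{2}{L}$ in the denominator of \eqref{B1} as the boundary correction left after telescoping.

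For general step sizes the cross-product term no longer vanishes and, in the SDP's worst case, may have negative sign, so it must be absorbed by enlarging the multiplier set. The plan is to add further interpolation inequalities -- either in the reverse orientation $(i,j)=(k+1,k)$ or between non-adjacent pairs -- with weights that depend on the sign of $Lt_{k}-1$. This case split is precisely what produces the $\min$ in the statement: the branch $-Lt_{k}^{2}+4t_{k}$ is realised when $Lt_{k}\le 1$ and $-L^{2}t_{k}^{3}+4t_{k}$ when $Lt_{k}\ge 1$; both coincide at $Lt_{k}=1$, and the assumption $t_{k}<\sqrt{3}/L$ keeps both positive. The hard part of the argument is guessing the correct weights, which -- as remarked just above the theorem -- is done in a computer-assisted manner using PESTO \cite{8263832}. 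Once guessed they must simultaneously (i) telescope the $f^{k}$-terms to $f^{1}-f^{N+1}$, (ii) make every cross product $\langle g^{i},g^{j}\rangle$ with $i\ne j$ cancel identically in the aggregated sum, and (iii) leave the coefficient of each $\|g^{k}\|^{2}$ equal to the prescribed $\min$-value plus the boundary correction summing to $\tfrac{2}{L}$. Verifying these three conditions, once the multipliers are in hand, reduces to routine but tedious algebra.
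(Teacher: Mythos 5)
Your high-level plan matches the paper's proof exactly: both use weak duality on the SDP \eqref{S1}, combining the adjacent interpolation inequalities (in both orientations), the descent-lemma bound at the terminal iterate, and the constraint $f^1-f^\star\le\Delta$. Your derivation of the symmetrised interpolation inequality is correct, and your warm-up for $t_k=1/L$ genuinely yields \eqref{B2} by telescoping, which is a nice independent sanity check the paper does not spell out.

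However, there is a real gap: you never exhibit the dual multipliers for general $t_k$. You correctly identify that the sign of $Lt_k-1$ should control which branch of the $\min$ is active and that the weights must be constructed accordingly, but you then declare the construction a matter of computer-assisted guessing and the verification ``routine but tedious algebra'' --- which is precisely the content of the theorem. The paper's proof supplies the explicit multipliers
\[
\alpha_k=\tfrac{B}{2}\max\{2,\,t_kL+1\},\qquad
\sigma_k=\tfrac{B}{4}\min\{-Lt_k^2+3t_k+t_{k-1},\,-L^2t_k^3+3t_k+t_{k-1}\}
\]
(with the appropriate boundary modifications and $\sigma_{N+1}=\tfrac{B}{4L}(2+Lt_N)$), assigns weight $\alpha_k$ to the forward constraint $(k,k+1)$ and $\alpha_k-B\ge 0$ to the reverse constraint $(k+1,k)$, and then verifies the algebraic identity. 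Without this, the argument is a proof sketch, not a proof.

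There is also a small but substantive imprecision in your requirement (ii). It is not the case that every cross product $\langle g^i,g^j\rangle$ cancels identically. In the paper's aggregated identity the residual equals $-\sum_{k=1}^N Q_k$, where for $t_k<1/L$ one has $Q_k=\tfrac{B}{4}(\tfrac{1}{L}-t_k)\|g^k-g^{k+1}\|^2\ge 0$; only for $t_k\ge 1/L$ is $Q_k=0$. So the aggregate is not an identity --- it leaves a nonnegative sum of squared gradient differences as slack, which is what makes the inequality go in the right direction. Requiring exact cancellation would in fact over-constrain the multipliers and fail for $t_k<1/L$. Your plan needs to be amended to allow (and then verify nonnegativity of) this residual quadratic form.
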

\begin{proof} Let $U$ denote the square of the right-side of inequality \eqref{B1} and let $B=\tfrac{U}{\Delta}$. To establish this bound, we show that $U$ is an upper bound for problem \eqref{P4}.  Consider the feasible point $\left(\{g^k; f^k\}_1^{N+1}; \ell\right)$ for problem \eqref{P4}. Suppose that
$$
\alpha_k = \tfrac{B}{2}\max\left\{2, t_kL+1\right\} \ \ \ k\in\left\{1, \ldots, N\right\}.
$$
In addition, we define $\sigma_1$ and $\sigma_k$, respectively, as follows:
\begin{align*}
& \sigma_1 =  \tfrac{B}{4}\min\left\{-Lt_1^2+3t_1, -L^2t_1^3+3t_1\right\}, \\
& \sigma_k =  \tfrac{B}{4}\min\left\{-Lt_k^2+3t_k+t_{k-1},-L^2t_k^3+3t_k+t_{k-1}\right\} \ \ k\in\{2, \ldots, N\},
\end{align*}
and $\sigma_{N+1}=1-\sum_{k=1}^N \sigma_k=\tfrac{B}{4L}(2+Lt_N)$. As $t_k\in(0, \tfrac{\sqrt{3}}{L})$ for $k\in\{1, \ldots, N\}$, the $\sigma_k$'s will be non-negative.
 It is seen that
 $$
 \sigma_k+(2\alpha_k-B)\tfrac{Lt_k^2}{4}-\tfrac{Bt_k}{2}=\tfrac{B}{4}(t_k+t_{k-1}) \ \ \ k\in\{2, \ldots, N\}.
 $$
 By using the last equality, one may verify directly through elementary  algebra that
\begin{align*}
&\ell-U+\sum_{k=1}^{N+1} \sigma_k\left( \left\| g^k\right\|^2-\ell\right)+B\left( f^\star-f^1+\Delta\right)+B\left(f^{N+1}-\tfrac{1}{2L}\|g^{N+1}\|^2 -f^\star\right)\\
& +\sum_{k=1}^{N} \alpha_k\Big(  f^{k}-f^{k+1}-\tfrac{1}{2L}\left\|g^{k}-g^{k+1}\right\|^2+\tfrac{L}{4}\left\|t_kg^{k}-\tfrac{1}{L}\left(g^{k}-g^{k+1}\right)\right\|^2\\
&-\left\langle g^{k+1},t_kg^k\right\rangle\Big)+\sum_{k=1}^{N} \left(\alpha_k-B\right)\Big(  f^{k+1}-f^{k}-\tfrac{1}{2L}\left\|g^{k+1}-g^{k}\right\|^2\\
&+\tfrac{L}{4}\left\|-t_kg^{k}-\tfrac{1}{L}\left(g^{k+1}-g^{k}\right)\right\|^2-\left\langle g^{k},-t_kg^k\right\rangle\Big)=
\tfrac{-(2\alpha_1-B)}{4L}\left\|g^1-g^{2}\right\|^2\\
&+\tfrac{Bt_1}{4}\left\|g^1\right\|^2-\tfrac{Bt_1}{2}\left\langle g^1, g^{2}\right\rangle+\tfrac{Bt_{N}}{4}\left\|g^{N+1}\right\|^2\\
&+\sum_{k=2}^N\left( \tfrac{-(2\alpha_k-B)}{4L}\left\|g^k-g^{k+1}\right\|^2+\tfrac{B(t_k+t_{k-1})}{4}\left\|g^k\right\|^2-\tfrac{Bt_k}{2}\left\langle g^k, g^{k+1}\right\rangle\right)=\\
 &-\sum_{k=1}^{N} Q_k,
\end{align*}
where
\[
Q_k = \begin{cases}
 \tfrac{B}{4}\left(\tfrac{1}{L}-t_k\right)\left\|g^k-g^{k+1}\right\|^2 &  t_k< \tfrac{1}{L}\\
 0 &  t_k\geq \tfrac{1}{L}.
\end{cases}
\]
Since $\sum_{k=1}^{N} Q_k$ is a non-negative quadratic function and the given dual multipliers are non-negative, we have $\ell\leq U$ for any feasible solution of \eqref{P4}.
\end{proof}

The special step length $t_k=\tfrac{\sqrt{4/3}}{L}$ for $k\in\left\{1, \ldots, N\right\}$ used to obtain \eqref{B3} will be motivated later in Theorem \ref{thm:optimal step size}.
Note that \eqref{B2} gives a formal proof (with a small improvement) of the bound claimed by Taylor {\cite[page 190]{Taylot_T}}; see \eqref{claim:Taylor}.

An important question concerning the bound \eqref{B1} is its difference with the optimal value of \eqref{P1}. It is known that the lower bound for Algorithm \ref{Alg1} is of the order $\Omega\left(\tfrac{1}{\sqrt{N}}\right)$  \cite{cartis2010complexity, carmon2019lower}. In what follows, we establish that the bound \eqref{B1} is exact in some cases.

\begin{proposition}
\label{prop:example}
The value
$$
\left(\tfrac{4\Delta}{\sum_{k=1}^N \min\left(-L^2t_k^3+4t_k,-Lt_k^2+4t_k\right)+\tfrac{2}{L}}\right)^{\tfrac{1}{2}}
$$
is the optimal value of \eqref{P1}  when all step lengths satisfy $t_k\in(0, \tfrac{1}{L}]$, $k\in\{1, \ldots, N\}$.
\end{proposition}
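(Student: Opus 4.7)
The plan is to exhibit an explicit one-dimensional $L$-smooth function that attains the bound of Theorem \ref{thm:main}; combined with the theorem itself, this will show the value in question is the optimum of problem \eqref{P1}. The complementary slackness structure in the proof of Theorem \ref{thm:main} is the guide: the nonnegative remainder $\sum_k Q_k$ appearing there vanishes precisely when $g^k = g^{k+1}$ on every step with $t_k < 1/L$, so I look for a tight example in which every iterate-gradient has exactly the same value.

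Concretely, let $g$ denote the right-hand side of the proposition, and in $\mathbb{R}$ take $x^1 := 0$, $x^{k+1} := x^k - t_k g$ for $k = 1, \ldots, N$, together with the interpolation data $g^k := g$, $f^{N+1} := f^\star + g^2/(2L)$, and $f^k := f^{k+1} + g^2\bigl(t_k - Lt_k^2/4\bigr)$ for $k = N, \ldots, 1$ (the last value being read off from tightness of the consecutive interpolation constraints in the proof of Theorem \ref{thm:main}). Telescoping and the definition of $g$ immediately give $f^1 - f^\star = g^2\bigl(\sum_k t_k - \tfrac{L}{4}\sum_k t_k^2 + \tfrac{1}{2L}\bigr) = \Delta$.

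It remains to verify the interpolation inequalities \eqref{Int-c} of Theorem \ref{T1} for every pair $(i,j)$ and then to invoke that theorem. Because all gradients coincide and $x^i - x^j = -g\sum_k t_k$ summed over the appropriate interval, substituting the telescoped expression for $f^i - f^j$ causes enormous cancellation and reduces each inequality to a manifestly nonnegative quadratic in the $t_k$: for $i > j$ it becomes $\tfrac{Lg^2}{4}\bigl(\sum_{k=j}^{i-1} t_k^2 + (\sum_{k=j}^{i-1} t_k)^2\bigr) \ge 0$, and for $i < j$ it becomes $\tfrac{Lg^2}{4}\bigl((\sum_{k=i}^{j-1} t_k)^2 - \sum_{k=i}^{j-1} t_k^2\bigr) \ge 0$; the latter, the only non-trivial case, follows from the elementary identity $(\sum t_k)^2 = \sum t_k^2 + 2\sum_{k<l} t_k t_l$. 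Theorem \ref{T1} then produces an $L$-smooth $f$ with $f(x^k) = f^k$, $\nabla f(x^k) = g$, and global minimum $\min_k\bigl(f^k - g^2/(2L)\bigr) = f^{N+1} - g^2/(2L) = f^\star$. Algorithm \ref{Alg1} applied to this $f$ from $x^1$ reproduces the chosen iterates, so $\min_k \|\nabla f(x^k)\| = g$, matching the claim. The main obstacle is arranging the $f^k$ so that both the total budget $f^1 - f^\star = \Delta$ and all non-consecutive interpolation inequalities hold simultaneously; this is resolved by the specific choice suggested by complementary slackness, and no higher-dimensional construction is needed.
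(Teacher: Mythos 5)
Your proof is correct, and it takes a genuinely different route from the paper. The paper exhibits an explicit one-dimensional piecewise-quadratic function (formula \eqref{Ex1}), verifies by hand that it is $L$-smooth with minimum $f^\star$, and computes the iterates and gradients directly. You instead supply only the interpolation data $\{(x^k; g^k; f^k)\}$ (all gradients equal to the bound $g$, and $f^k$ telescoping backward from $f^{N+1}=f^\star+g^2/(2L)$), check the interpolation inequalities \eqref{Int-c} pairwise, and then invoke the second part of Theorem \ref{T1} to deduce existence of the worst-case function and to identify its minimum value. The interpolation data you choose is in fact exactly the data underlying the paper's explicit construction (your $f^k$ match the paper's up to the normalization $f^\star=0$), so the two worst-case instances coincide; the difference is purely in how existence of $f$ is established. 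Your route is arguably cleaner, since it replaces the piecewise-smoothness check by the algebraic verification that the two families of inequalities reduce to $\sum_{k}t_k^2 + (\sum_k t_k)^2 \ge 0$ (for $i>j$) and $(\sum_k t_k)^2 \ge \sum_k t_k^2$ (for $i<j$), both immediate for nonnegative $t_k$; the paper's route has the minor advantage that the reader sees the extremal function concretely (and indeed the paper displays its graph). One small point worth making explicit in your write-up: the identification $\min_k f^k = f^{N+1}$ needed to apply the last part of Theorem \ref{T1} uses that $t_k - Lt_k^2/4 > 0$ on $(0,1/L]$, so the $f^k$ are strictly decreasing; you assert the conclusion but do not note this monotonicity.
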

\begin{proof}
 It suffices for a given $N$ to demonstrate an $L$-smooth function $f$ and a point $x^1$ such that
\begin{align}\label{BE}
\min_{1\leq k\leq N+1} \left\|\nabla f(x^k)\right\|=  \left(\tfrac{4\Delta}{\sum_{k=1}^N \min\left(-L^2t_k^3+4t_k,-Lt_k^2+4t_k\right)+\tfrac{2}{L}}\right)^{\tfrac{1}{2}}.
\end{align}
Suppose now that $t_k\in(0, \tfrac{1}{L}]$, $k\in\{1, \ldots, N\}$, and $U$ denotes the right-hand-side of equality \eqref{BE}. We set  $t_{N+1}=\tfrac{1}{L}$. Let
 \begin{align*}
& l_i=U\left(\sum_{k=i}^{N+1} t_k \right), \ \ f^i=\Delta-\tfrac{U^2}{4}\left(\sum_{k=1}^{i-1} -Lt_k^2+4t_k\right) \ \ \ & i\in\left\{1, \ldots, N+1\right\},
\end{align*}
 and $l_{N+2}=0$. By elementary calculus, one can check that the function $f:\mathbb{R}\to\mathbb{R}$ given by
{\small{
\begin{align}\label{Ex1}
f(x) = \begin{cases}
\tfrac{L}{2}(x-l_1)^2+U(x-l_1)+f^1  &  \ \ x\in\left[\tfrac{1}{2}(l_1+l_2), \infty\right)
\vspace{1mm}
\\
\tfrac{-L}{2}(x-l_{i+1})^2+U(x-l_{i+1})+f^{i+1} &  \ \ x\in \left[l_{i+1}, \tfrac{1}{2}(l_i+l_{i+1})\right]
\vspace{1mm}\\
\tfrac{L}{2}(x-l_{i+1})^2+U(x-l_{i+1})+f^{i+1}  &  \ \ x\in \left[\tfrac{1}{2}(l_{i+1}+l_{i+2}), l_{i+1}\right]
\vspace{1mm}\\
\tfrac{L}{2}x^2 &  \ \ x\in\left(-\infty, \tfrac{1}{2}l_{N+1}\right]
\end{cases}
\end{align}
}}
 \hspace{-.22cm} for $i\in\left\{1, \ldots, N\right\}$, is $L$-smooth with the optimal value $f^\star= 0$ and the optimal solution $x^\star=0$. In addition, we have equality \eqref{BE} for $x^1=l_1$. Indeed,
 \begin{align*}
& x^i=l_i \ \ \ & i\in\left\{1, \ldots, N+1\right\}  \\
& \nabla f(x^i)=U \ \ \ & i\in\left\{1, \ldots, N+1\right\}\\
&  f(x^i)=f^i \ \ \ & i\in\left\{1, \ldots, N+1\right\}.
 \end{align*}
\end{proof}

 Figure \ref{Fig1} represents the plot of function $f$ as constructed in the proof of Proposition \ref{prop:example} for different parameters and the fixed step length $t_k=\tfrac{1}{L}$ for all $k$.

 Note that, though we have only  shown  the exactness of the bound \eqref{B1} for  step lengths in the interval $(0, \tfrac{1}{L}]$, we also conjecture that the  bound \eqref{B1} is in fact exact for all step lengths in the interval $(0, \tfrac{\sqrt{3}}{L})$.

 By minimizing the right-hand-side of \eqref{B1}, the next theorem gives the \lq optimal\rq~ step lengths with respect to the bound.
\begin{figure}
  \centering
  \begin{subfigure}[b]{0.3\textwidth}
\begin{tikzpicture}[domain=0:4]
    \draw[->] (-2,0) -- (4.5,0) node[right] {$x$};
    \draw[->] (0,-.5) -- (0,3) node[above] {$f(x)$};
    \draw[color=blue]    plot [domain = 3.4017:4, samples = 300](\x,0.5*\x^2-3.0237*\x+6.2857) ;
    \draw[color=blue]    plot [domain = 3.0237:3.4017, samples = 300](\x,-0.5*\x^2+3.7796*\x-5.2857) ;
    \draw[color=blue]    plot [domain = 2.2678:2.6458, samples = 300](\x,-0.5*\x^2+3.0237*\x-3.1429) ;
    \draw[color=blue]    plot [domain = 1.5119:1.8898, samples = 300](\x,-0.5*\x^2+2.2678*\x-1.5714) ;
    \draw[color=blue]    plot [domain = 0.7559:1.1339, samples = 300](\x,-0.5*\x^2+1.5119*\x-.5714) ;

    \draw[color=blue]    plot [domain = 2.6458:3.0237, samples = 300](\x,0.5*\x^2-2.2678*\x+3.8571) ;
    \draw[color=blue]    plot [domain = 1.8898:2.2678, samples = 300](\x,0.5*\x^2-1.5119*\x+2) ;
    \draw[color=blue]    plot [domain = 1.1339:1.5119, samples = 300](\x,0.5*\x^2-0.7559*\x+0.7143) ;
    \draw[color=blue]    plot [domain = 0.3780:0.7559, samples = 300](\x,0.5*\x^2-0*\x+4.4409e-16) ;

    \draw[color=blue]    plot [domain = 0:0.3780, samples = 300](\x,.5*\x^2) ;
    \draw[color=blue]    plot [domain = -2:0, samples = 300](\x,-.5*\x^2) ;3.0237

\draw[color=black] (3.8296,-.2) node {$x^1$};
\draw [fill=black] (3.7796,0) circle (1.1pt);

\draw[dotted,color=red] (3.0237,0) -- (3.0237,3);

\draw[color=black] (3.0737,-.2) node {$x^2$};
\draw [fill=black] (3.0237,0) circle (1.1pt);
\draw[dotted,color=red] (3.4017,0) -- (3.4017,3);


\draw[dotted,color=red] (1.8898,0) -- (1.8898,3);

\draw[dotted,color=red] (2.6458,0) -- (2.6458,3);
\draw[color=black] (2.3178,-.2) node {$x^3$};
\draw [fill=black] (2.2678,0) circle (1.1pt);

\draw[dotted,color=red] (2.2678,0) -- (2.2678,3);

\draw[dotted,color=red] (1.1339,0) -- (1.1339,3);
\draw[color=black] (1.5619,-.2) node {$x^4$};
\draw [fill=black] (1.5119,0) circle (1.1pt);

\draw[dotted,color=red] (1.5119,0) -- (1.5119,3);

\draw[dotted,color=red] (0.7559,0) -- (0.7559,3);

\draw[dotted,color=red] (0.3780,0) -- (0.3780,3);
\draw[color=black] (0.8059,-.2) node {$x^5$};
\draw [fill=black] (0.7559,0) circle (1.1pt);

\end{tikzpicture}
\caption{$N=4,\ \Delta=2,\ L=1$} \label{fig:M1}
\end{subfigure}

\begin{subfigure}[b]{0.3\textwidth}
\begin{tikzpicture}[domain=0:4]
    \draw[->] (-2,0) -- (4.4,0) node[right] {$x$};
    \draw[->] (0,-.5) -- (0,4.4) node[above] {$f(x)$};

    \draw[color=blue]    plot [domain = 2.9848:3.6, samples = 300](\x,\x^2-5.1168*\x+9.8182) ;

    \draw[color=blue]    plot [domain = 2.5584:2.9848, samples = 300](\x,-\x^2+6.8224*\x-8) ;
    \draw[color=blue]    plot [domain = 1.7056:2.1320, samples = 300](\x,-\x^2+5.1168*\x-4) ;
    \draw[color=blue]    plot [domain = 0.8528:1.2792, samples = 300](\x,-\x^2+3.41128*\x-1.4545) ;

    \draw[color=blue]    plot [domain = 2.1320:2.5584, samples = 300](\x,\x^2-3.4112*\x+5.0909) ;
    \draw[color=blue]    plot [domain = 1.2792:1.7056, samples = 300](\x,\x^2-1.7056*\x+1.8182) ;
    \draw[color=blue]    plot [domain = 0.4264:0.8528, samples = 300](\x,\x^2-0*\x+0) ;

    \draw[color=blue]    plot [domain = 0:0.4264, samples = 300](\x,\x^2) ;
    \draw[color=blue]    plot [domain = -2.05:0, samples = 300](\x,-\x^2) ;

\draw[dotted,color=red] (2.9848,0) -- (2.9848,4.4);
\draw[color=black] (3.4612,-.2) node {$x^1$};
\draw [fill=black] (3.4112,0) circle (1.1pt);

\draw[dotted,color=red] (2.5584,0) -- (2.5584,4.4);

\draw[dotted,color=red] (2.1320,0) -- (2.1320,4.4);
\draw[color=black] (2.6084,-.2) node {$x^2$};
\draw [fill=black] (2.5584,0) circle (1.1pt);

\draw[dotted,color=red] (1.7056,0) -- (1.7056,4.4);

\draw[dotted,color=red] (1.2792,0) -- (1.2792,4.4);
\draw[color=black] (1.7556,-.2) node {$x^3$};
\draw [fill=black] (1.7056,0) circle (1.1pt);

\draw[dotted,color=red] (0.8528,0) -- (0.8528,4.4);

\draw[dotted,color=red] (0.4264,0) -- (0.4264,4.4);
\draw[color=black] (0.9028,-.2) node {$x^4$};
\draw [fill=black] (0.8528,0) circle (1.1pt);

\end{tikzpicture}
\caption{$N=3,\ \Delta=4,\ L=2$} \label{fig:M2}
\end{subfigure}
\caption{ Plot of the function $f$ in \eqref{Ex1} for different parameters and $t_k=\tfrac{1}{L}$. (Dotted lines denote the endpoints of intervals.)} \label{Fig1}
\end{figure}
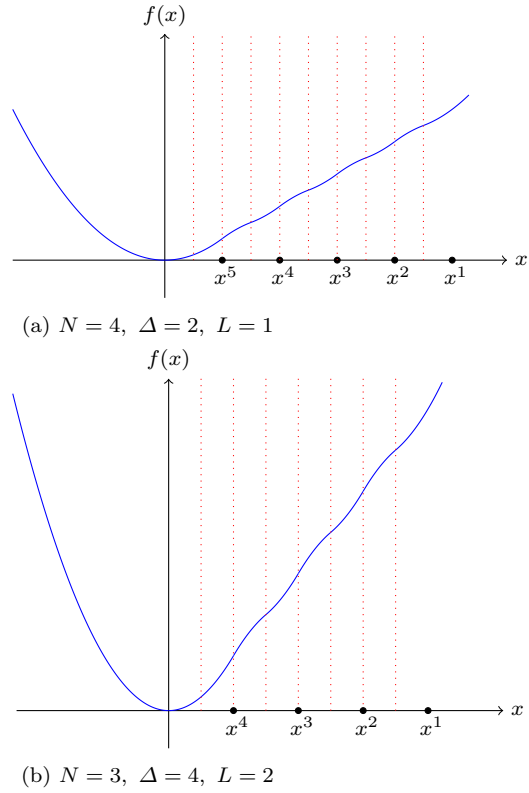
\begin{theorem}
\label{thm:optimal step size}
Let $f$ be an $L$-smooth function. Then the optimal step size for gradient method with respect to  bound \eqref{B1} is given by
\[
t_k=\tfrac{\sqrt{\tfrac{4}{3}}}{L}\ \ \forall k\in\left\{1,\dots,N\right\},
\]
provided that $t_k \in (0, \tfrac{\sqrt{3}}{L})$ for all $k \in \{1,\ldots,N\}$.
\end{theorem}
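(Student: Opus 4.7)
The plan is to observe that minimizing the right-hand side of \eqref{B1} over $(t_1,\ldots,t_N)\in(0,\sqrt{3}/L)^N$ is equivalent to maximizing the denominator
\[
\sum_{k=1}^N \min\bigl(-L^2t_k^3+4t_k,\; -Lt_k^2+4t_k\bigr) + \tfrac{2}{L},
\]
and since this sum is separable in the $t_k$ and the additive constant $2/L$ is fixed, the problem decouples into $N$ identical one-dimensional maximization problems. It therefore suffices to maximize the scalar function
\[
\phi(t) := \min\bigl(\phi_1(t),\phi_2(t)\bigr), \qquad \phi_1(t) = -L^2t^3+4t, \quad \phi_2(t) = -Lt^2+4t,
\]
over $t\in(0,\sqrt{3}/L)$, and the common optimizer gives the desired $t_k$.

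Next I would determine which branch of the minimum is active. A direct comparison shows $\phi_1(t)\le \phi_2(t)$ iff $Lt\ge 1$, so $\phi(t)=\phi_2(t)$ on $(0,1/L]$ and $\phi(t)=\phi_1(t)$ on $[1/L,\sqrt{3}/L)$. On $(0,1/L]$ we have $\phi_2'(t)=-2Lt+4>0$, so $\phi_2$ is strictly increasing there and attains its maximum at the right endpoint $t=1/L$ with value $3/L$. On $[1/L,\sqrt{3}/L)$ the derivative $\phi_1'(t)=-3L^2t^2+4$ vanishes uniquely at $t^\star=\sqrt{4/3}/L=2/(L\sqrt{3})$, which lies in this interval since $1<\sqrt{4/3}<\sqrt{3}$; moreover $\phi_1''(t^\star)<0$, so $t^\star$ is the global maximizer of $\phi_1$ there.

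The final step is to compare the two candidate maxima and confirm $t^\star$ is the global optimizer of $\phi$. A short computation gives
\[
\phi_1(t^\star) \;=\; \tfrac{16}{3\sqrt{3}\,L} \;>\; \tfrac{3}{L} \;=\; \phi_2(1/L),
\]
so $t^\star=\sqrt{4/3}/L$ is the unique maximizer of $\phi$ on $(0,\sqrt{3}/L)$. Applying this coordinatewise to the separable objective yields the claimed optimal step sizes $t_k=\sqrt{4/3}/L$ for all $k\in\{1,\ldots,N\}$.

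No step of this plan appears to present a genuine obstacle: the key qualitative observation is the separability of the denominator, after which the remaining analysis is a piecewise one-variable calculus exercise. The only minor care needed is to ensure that the critical point $t^\star$ of the cubic branch indeed falls inside the interval where that branch is active and inside the hypothesis $(0,\sqrt{3}/L)$, both of which follow from $1<\sqrt{4/3}<\sqrt{3}$.
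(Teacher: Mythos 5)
Your proof is correct, and it takes a genuinely different route from the paper's. The paper also reduces to maximizing $H(t)=\sum_k \min\bigl(-L^2t_k^3+4t_k,\,-Lt_k^2+4t_k\bigr)$, but then argues abstractly: it asserts that $H$ is strictly concave on $\bigl(0,\sqrt{3}/L\bigr)^N$ and that $\nabla H(\bar t)=0$ at $\bar t_k=\sqrt{4/3}/L$, so $\bar t$ is the unique maximizer. Your argument instead exploits separability to reduce to a single scalar problem and then performs an explicit piecewise analysis: you identify where each branch of the $\min$ is active (switch at $Lt=1$, since $\phi_1-\phi_2=Lt^2(1-Lt)$), show $\phi_2$ is increasing up to $t=1/L$, locate the unique critical point $t^\star=\sqrt{4/3}/L$ of $\phi_1$ on $[1/L,\sqrt{3}/L)$, and compare values to confirm $t^\star$ wins. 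The paper's argument is shorter but leaves the strict concavity of $H$ (a pointwise minimum of two strictly concave cubics/quadratics, summed over coordinates) to the reader; yours is more elementary and self-contained, verifying everything by direct computation. One small economy you could add: since $\phi_1(1/L)=\phi_2(1/L)=3/L$ so $\phi$ is continuous, and $\phi_1'(1/L)=1>0$, the function $\phi$ is increasing on $(0,t^\star]$ and decreasing on $[t^\star,\sqrt{3}/L)$, which gives unimodality directly and makes the final value comparison unnecessary. Both proofs are valid.
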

\begin{proof}
We minimize the right-hand-side of \eqref{B1}, that is
\[
\min_{t_k\in(0,\tfrac{\sqrt{3}}{L})} \ \left(\tfrac{4\Delta}{\sum_{k=1}^N \min\left(-L^2t_k^3+4t_k,-Lt_k^2+4t_k\right)+\tfrac{2}{L}}\right)^{\tfrac{1}{2}},
\]
which is equivalent to maximizing
\[
\max_{t\in\left(0,\tfrac{\sqrt{3}}{L}\right)^N} \ H(t):= \sum_{k=1}^N \min\left(-L^2t_k^3+4t_k,-Lt_k^2+4t_k\right).
\]
Since $H$ is a strictly concave function on $\left(0,\tfrac{\sqrt{3}}{L}\right)^N$ and at $\bar t$ given by
\[
\bar t_k=\tfrac{\sqrt{\tfrac{4}{3}}}{L}\ \ \forall k\in\left\{1,\dots,N\right\},
\]
we have $\nabla H\left(\bar t\right)=0$, which shows that $\bar t$ is the unique maximum solution of $H$ over $\left(0,\tfrac{\sqrt{3}}{L}\right)^N$.
\end{proof}

The step length $\tfrac{1}{L}$ commonly is regarded as the optimal step length  in the literature; see  \cite[Chapter 1]{Nesterov}.  Due to the
example introduced in \eqref{Ex1}, we see that  the worst-case convergence rate for the step length $\tfrac{1}{L}$
 cannot be better than $\left(\tfrac{4L(f(x^1)-f^\star)}{3N+2}\right)^{\tfrac{1}{2}}$. By our analysis,
 it follows that,  for the step length $\tfrac{2\sqrt{3}}{3L}$, we get the convergence rate \eqref{B3},
 which is better than $\left(\tfrac{4L(f(x^1)-f^\star)}{3N+2}\right)^{\tfrac{1}{2}}$, since the constant in the bound improves from ca.\ $\tfrac{4}{3}\approx 1.333$ to $\tfrac{6\sqrt{3}}{8} \approx 1.299$.
\section{Concluding remarks}
In this paper, we studied the convergence rate of gradient  method for $L$-smooth functions
 and we provided a new convergence rate when the step lengths belong to the interval
  $(0, \tfrac{\sqrt{3}}{L})$. Moreover, we have shown that this convergence rate is tight for step lengths in the interval $(0,\tfrac{1}{L}]$. As mentioned in the introduction,  Algorithm \ref{Alg1} is convergent for the step lengths
   in the larger interval  $(0, \tfrac{2}{L})$. Following extensive numerical experiments,
   where we solved the semidefinite program \eqref{S1} for different parameter values,
   we conjecture that when  $t_k\in\left(0, \tfrac{2}{L}\right)$ for $k\in\left\{1, \ldots, N\right\}$, we have
\begin{align*}
 \min_{1\leq k\leq N+1}\left\|\nabla f(x^k)\right\|\leq \left(\frac{4\Delta}{\sum_{k=1}^N \min\left(-L^2t_k^3+4t_k,-Lt_k^2+4t_k\right)}\right)^{\tfrac{1}{2}},
\end{align*}
under the same conditions as for Theorem \ref{thm:main}. The right-hand-side is again minimized by the constant step length
$t_k=\tfrac{\sqrt{4/3}}{L}$ for all $k \in \{1,\ldots,N\}$.

\begin{acknowledgements}
The authors would like to thank Adrien Taylor for valuable discussions. We are also very grateful to two anonymous referees for their valuable comments and suggestions which help to improve the paper considerably. In particular, one of the reviewers pointed out the result in \cite[Theorem 7 in Appendix]{Drori} to us, which was most helpful.
\end{acknowledgements}

%
%

\bibliographystyle{spmpsci}      
\bibliography{references}

\end{document}